\numberwithin{equation}{section}
\newtheorem{theorem}{Theorem}[section]
\newtheorem{corollary}[theorem]{Corollary}
\newtheorem{lemma}[theorem]{Lemma}
\newtheorem{proposition}[theorem]{Proposition}
\newtheorem{remark}[theorem]{Remark}
\def \begineq{\begin{equation}}
\def \endeq{\end{equation}}
\def \({\left(}
\def \){\right)}
\def \<{\langle}
\def \>{\rangle}
\def \bar{\overline}
\begin{document}
 \thispagestyle{plain}

\title[Geometric Quantization of Toda systems]{Geometric quantization of finite Toda systems and coherent states}
\author[Rukmini Dey and Saibal Ganguli]{Rukmini Dey and Saibal Ganguli}

\date{4/3/2017}

\maketitle
\begin{abstract}
 Adler had showed that the Toda system can be given a coadjoint orbit description. We quantize the Toda system by viewing it as a single orbit of a multiplicative group of lower
 triangular matrices of determinant one with positive
diagonal entries. We get a unitary
representation of the group with square integrable polarized sections of the quantization as the module . We find the Rawnsley coherent
states after completion of the above space of sections. We also find non-unitary finite dimensional quantum Hilbert spaces for the system. Finally we give an expression for the quantum Hamiltonian for the system.\\[0.2cm]
\textsl{MSC}:Differential geometry 53xx, Quantum Theory 81xx.\\
\textsl{Keywords}:geometric quantization, coadjoint orbit,induced representation,Toda.
\end{abstract} 

\label{first}

 \section{ Introduction}
 The connection between finite Toda system and coadjoint orbits was first explored by Adler ~\cite{A}.
 We summarize the introduction to the Toda system as in ~\cite{A}.
 The Hamiltonian  considered is  $H = \frac{1}{2} \sum_{i=1}^{n} y_i^2 + \sum_{i=1}^{n} {e}^{x_i - x_{i+1}}$,  $x_0 = x_{n+1}$.  The Hamiltonian equations  
 are  $\stackrel{\cdot}{{x_i}}= y_i,$ $\stackrel{\cdot}{{y_i}} = {}^{x_{i-1} - x_i} - {e}^{x_i - x_{i+1}}$, $i=1,...,n$.

Define 

\begin{center}
$a_i = \frac{1}{2} {e}^{\frac{1}{2} (x_i - x_{i +1})} $, $i = 1,...,n-1$, \\

$b_i = \frac{1}{2} y_i ,$ $i=1,...,n$.
\end{center}

Note that $a_i  >  0$ for $i =1,...,n-1$.

Adler showed that the Hamiltonian equation of motion corresponds to a Lax equation and gave explicit expression for  the integrals of motion which Poisson commute w.r.t. the following Poisson bracket.

\begin{center}
$\{ f, g \} = \stackrel{\cdot}{\sum}( a_{i-1} g_{a_{i-1}} - a_i g_{a_{i}} ) f_{b_{i}} 
+ \stackrel{\cdot}{\sum} a_i (g_{b_{i}} - g_{b_{i+1}}) f_{a_{i}}$ 
\end{center}
where $\cdot$ means to omit terms with undefined elements, i.e. terms involving 
$a_0, a_n, b_{n+1}$. 

({\bf Note:} In page 222, ~\cite{A}, there is a misprint in the formula for the Poisson bracket. The correct formula for the Poisson bracket, as written above, is given on page 225 in the same paper).

Adler goes on to show that the same system has a coadjoint orbit description of the group of lower triangular matrices of non-zero diagonal. In fact, one can restrict the  action to that of lower triangular triangular matrices of determinant $1$ and positive diagonal elements. The orbit is homeomorphic to 
${\mathbb{R}_+}^{n-1} \times {\mathbb{R}}^{n-1}$, 
just described by $a_i
>0$, $i = 1,..., n-1$ and $ b_i$, $i =1,...,n$  such that $b_1+ b_2 + ...+ b_n =c$, $c$ a constant. 

We describe this precisely and show that the Toda system corresponds to a single orbit. We explore it further to geometrically quantize the Toda system. This is possible since the orbit  has a symplectic structure whose induced Poisson bracket corresponds exactly to the Poisson bracket mentioned above.

In two famous papers ~\cite{K1}, ~\cite{K2}, Kostant describes the  Kostant-Souriau quantization in general and for coadjoint orbits 
in particular. Using his construction, we were able to construct an infinite dimensional Hilbert-space of 
polarized sections of the quantum bundle (which is trivial in our case). The polarized sections are square 
integrable functions of $a_i$, $i =1,..., (n-1)$ only.  In this construction  we  modified  the 
usual volume (given by the symplectic form) by an exponential decay.  The group of 
lower triangular matrices with determinant $1$ and positive diagonal entries acts on this Hilbert space giving in fact a unitary representation. By 
Kostant's result on 
general coadjoint orbits, ~\cite{K1}, ~\cite{K2},  this exactly corresponds to the Hilbert space of geometric quantization of the orbit and hence the Toda system.

Next, we construct Rawnsley coherent states, ~\cite{R}, of the Toda system corresponding to this quantization.
There is another definition of coherent states for orbits. The coherent states are obtained by moving any ``vacuum'' vector by the group action. We show  for orbit quantization, these two notions of coherent states coincide. 

We  also construct  (in the last section) finite dimensional representations which are not unitary.

The general reference we have followed of geometric quantization is the book by Woodhouse ~\cite{W}.

This paper is mainly an exercise in mathematics.
It would be interesting to relate this quantization with other quantizations of the Toda system which are relevant to physics, ~\cite{S}, ~\cite{SS}.

The  quantization of  Toda systems using geometric methods has already been worked out  by Reyman and Semenov-Tian-Shansky, ~\cite{RSe}, ~\cite{Se}  and Kharchev and Lebedev ~\cite{KL}. The  coherent states have not  been considered in this context as far as the authors know. 

It would be interesting to see if the same idea of geometric  quantization goes through for KdV type systems. 
The symplectic structure in this case has been derived by Adler,  ~\cite{A}.

 \section{ The Orbit Corresponding to  Toda System}\label{Toda_1}
  A Toda system is characterized by a upper triangular matrix  of the following form, ~\cite{A}, 

  \begin{equation}\label{mat}
  A=\begin{bmatrix}
     b_1 & a_1 &      \\ 
           & b_2 & a_2 &    \\
          &     & \ddots & \ddots\\
          &      &        & b_{n-1} & a_{n-1} \\
          &       &        &          & b_{n} \\
      \end{bmatrix}
      \end{equation}
         
       where the trace is constant and each $a_i$ is positive a and all other matrix elements are zero.
       
       The space of upper triangular matrix is the dual lie algebra of multiplicative of group of lower triangular matrix.  There is an coadjoint
       action. Since Toda matrices are upper triangular we  attempt to find its orbit structure. The orbit structure is described in the following 
       proposition.
       \begin{proposition}\label{orb}
        The Toda space described above is a codajoint orbit of the multiplicative group of lower triangular matrix with positive diagonals.
       \end{proposition}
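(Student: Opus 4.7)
The plan is to exhibit the Toda space as a single coadjoint orbit in three moves: set up the coadjoint action explicitly, verify that the orbit of any Toda matrix is contained in the Toda space, and then use a tangent-space dimension count together with connectedness to force the orbit to fill it.

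I would identify $\mf g^*$, the dual of the Lie algebra $\mf g$ of lower triangular matrices, with the space of upper triangular matrices via the trace pairing $\langle \xi, X\rangle = \mathrm{tr}(\xi X)$, noting that strictly lower triangular matrices pair trivially with $\mf g$. The coadjoint action then reads $\mathrm{Ad}^*_g(A) = \pi_+(g A g^{-1})$, where $\pi_+$ keeps the upper triangular part. For $A$ of the form \eqref{mat} and $g$ lower triangular with positive diagonal entries, a short support argument, using $g_{ik}=0$ for $k>i$, $(g^{-1})_{lj}=0$ for $l<j$, and $A_{kl}\neq 0$ only when $l\in\{k,k+1\}$, forces $(gAg^{-1})_{ij}=0$ whenever $j\geq i+2$, so the conjugate is already of Toda form and not merely upper triangular. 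Cyclicity of the trace preserves $\sum_i b_i$, and the direct computation $(gAg^{-1})_{i,i+1}=g_{ii}a_i/g_{i+1,i+1}$ keeps the superdiagonal strictly positive, so every orbit meeting the Toda space is contained in it.

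The step I expect to be the main obstacle is transitivity, which I would handle infinitesimally. Evaluating $X\mapsto \pi_+([X,A])$ at a Toda point $A$, the diagonal generator $X=E_{ii}$ yields $+a_i$ at position $(i,i+1)$ and $-a_{i-1}$ at position $(i-1,i)$, while the subdiagonal generator $X=E_{i+1,i}$ yields $-a_i$ at $(i,i)$ and $+a_i$ at $(i+1,i+1)$. Because all $a_j$ are positive, these images span the full $(2n-2)$-dimensional tangent space to the Toda space at $A$: the $E_{ii}$'s vary the $a_j$'s independently, while the $E_{i+1,i}$'s vary the $b_j$'s subject to the fixed trace. Hence every coadjoint orbit meeting the Toda space is open in it. Since coadjoint orbits partition $\mf g^*$, the Toda space is a disjoint union of relatively open orbits, and since the Toda space is homeomorphic to the connected set $\RR_+^{n-1}\times\RR^{n-1}$, the partition must consist of a single orbit, establishing the proposition.
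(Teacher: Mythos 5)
Your proof is correct and reaches the same endgame as the paper --- every orbit meeting the Toda space is open in it, and connectedness of $\RR_+^{n-1}\times\RR^{n-1}$ then forces a single orbit --- but your route to openness is genuinely different. The paper works at the group level: it computes $[LC_{12}L^{-1}]_+$ explicitly, runs an induction (Lemma~\ref{ind}) to show the orbit of the strictly upper part $C$ of $T=D+C$ is at least $2(n-1)$-dimensional, and then invokes the constant rank/submersion theorem. You instead differentiate the orbit map at the identity and check that $X\mapsto\pi_+([X,A])$ already fills $T_A(\mathrm{Toda})$, with the $E_{ii}$ supplying the superdiagonal ($a_j$) directions and the $E_{i+1,i}$ the trace-zero diagonal ($b_j$) directions; positivity of the $a_j$ is exactly what makes each family span its $(n-1)$-dimensional piece, and the two families are decoupled. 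This is shorter and avoids the paper's inductive bookkeeping (including its appeal to even-dimensionality of orbits to upgrade ``at least $3$'' to ``at least $4$''). You also make explicit something the paper leaves largely implicit: the support argument showing $\pi_+(gAg^{-1})$ remains in Toda form with positive superdiagonal and fixed trace, i.e.\ that the Toda space really is a union of orbits; the paper obtains this only indirectly from the decomposition $T=D+C$ and the displayed formulas. The one step worth phrasing a bit more carefully is the passage from ``the differential of the orbit map at $e$ surjects onto $T_A(\mathrm{Toda})$'' to ``the orbit is open in the Toda space'': by equivariance the orbit map $G\to\mathrm{Toda}$ has constant rank, hence is a submersion and therefore an open map, which is precisely the constant-rank argument the paper spells out.
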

\begin{proof}
       In general 
       the coadjoint action of $l$
       invertible and
       lower triangular on an upper triangular matrix $u$  described in  \cite{A} is given by.
       
       \begin{equation}
       l(u)={[l u l^{-1}]}_{+}
         \end{equation}
         
         where $[ ]_{+}$ mean projection to the  upper triangular part.
         
         We find the orbits of the  following matrices $C_{ii+1}$  where only ${(C_{ii+1})}_{ii+1} > 0$ and 
         rest of the terms
         are  zero. Let $L$ be any invertible lower triangular  matrix given by $L=(L_{ij})$
         then the matrix ${LC_{12}L^{-1}}_+$ is given by
         \begin{center}
         \begin{equation}\label{mat_1}
          ({[LC_{12}L^{-1}}]_+)_{11}=-\frac{L_{21}}{L_{22}} c,
         \end{equation}
          \begin{equation}\label{mat_2}
           ({[LC_{12}L^{-1}]}_+)_{12}=\frac{L_{11}}{L_{22}} c,
          \end{equation}
            \begin{equation}\label{mat_3}
           ({[LC_{12}L^{-1}]}_+)_{22}=\frac{L_{21}}{L_{22}} c
          \end{equation}
          \end{center}
          
         where $c$ is the only nonzero entry of $C_{12}$ that is ${(C_{12})}_{12}=c$.All other terms of the above matrix is zero.
         
        Now If $\frac{L_{21}}{L_{22}}$ and $\frac{L_{11}}{L_{22}}$ are allowed to vary freely we will get  a two dimensional  manifold.
           Proceeding further  by induction we can prove that the orbit of $C$ with diagonal entries zero , all $(i,i+1) $ entry 
           greater than zero and rest entries zero is 
         $2(n-1)$ dimensional.The inductive argument is presented in the following lemma \ref{ind}.
          If the  Toda matrix is  $T$ then
          
          \begin{equation}\label{toda}
          T= D + C
          \end{equation}

           where $D$ is diagonal and $C$ is a matrix of  the type described above. The action of the invertible
           lower triangular matrices with 
           positive diagonals will fix  $D$ and take the $C$ to  a $2(n-1)$ dimensional  manifold. Since the orbit map 
           from $G$ to the Toda space
           is of constant rank then by constant rank theorem the image (the orbit) is a manifold of
           dimension equal to the  constant rank of the
           differential of the orbit map. Since each image is
         a   $2(n-1)$ dimensional manifold, the rank  of the differential is  $2(n-1)$.  Since the Toda space is
          $2(n-1)$ dimensional the differentials are surjections. So by submersion theorem local open sets of 
          $G$  map surjectively by the orbit map
          to open sets of Toda space. Thus the orbits are open.
          
            Since by above orbits are open each orbit is open and closed. This is because if we take an orbit $O$, 
            it is open by the above argument and its
            complement is open as it is the union of other orbits.  We have  the set $O$ is open and   
            so is its complement,  making it an open and closed set. 
           
            Since the Toda system is a connected set there will be only one orbit corresponding to it. 
           \end{proof}

           \begin{lemma}\label{ind}
         
          The coadjoint orbit of the matrix of type $C$ is of dimension $2(n-1)$ 
            \end{lemma}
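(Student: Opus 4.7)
The plan is to proceed by induction on $n$, as suggested in the proof of Proposition~\ref{orb}. The base case $n=2$ already follows from the explicit computation in equations (\ref{mat_1})--(\ref{mat_3}): the orbit of $C_{12}$ is parametrised as $\{(-tc,\,sc,\,tc) : s>0,\ t\in\mathbb{R}\}$ in the coordinates $(b_1, a_1, b_2)$, a two-dimensional submanifold of the Toda space, which matches $2(n-1)=2$.

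For the inductive step, I would decompose both $C$ and a generic invertible lower triangular $L$ in $2\times 2$ block form with blocks of sizes $n-1$ and $1$,
\[
C = \begin{pmatrix} C' & a_{n-1}\, e_{n-1} \\ 0 & 0 \end{pmatrix}, \qquad
L = \begin{pmatrix} L' & 0 \\ v^T & \ell \end{pmatrix},
\]
where $C'$ is the $(n-1)\times(n-1)$ Toda-shape matrix with super-diagonal $a_1,\ldots,a_{n-2}$, $L'$ is invertible lower triangular of size $n-1$, $v\in\mathbb{R}^{n-1}$, and $\ell>0$. A direct block multiplication, followed by projection onto the upper-triangular part, shows that in $[LCL^{-1}]_+$ the top $n-2$ entries of the last column vanish, the $(n-1, n)$ entry equals $a_{n-1}\ell^{-1} L'_{n-1, n-1}$ (positive), the $(n, n)$ entry equals $a_{n-1}\ell^{-1} v_{n-1}$ (real), and the upper-left $(n-1)\times(n-1)$ block equals $[L'C'(L')^{-1}]_+$ plus a rank-one correction along the $(n-1)$-th row coming from $v$ and $\ell$.

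Next I would work on the slice $v=0$, $\ell=1$, where the rank-one correction vanishes identically: as $L'$ varies, the upper-left block sweeps a $2(n-2)$-dimensional submanifold by the induction hypothesis, the $(n-1,n)$ entry varies through $L'_{n-1,n-1}$, and the rest of the last row/column is unchanged. Turning on $\ell$ and $v_{n-1}$ afterwards produces two further independent directions (a positive one via the $(n-1, n)$ entry and a real one via the $(n, n)$ entry), manifestly transverse to the $2(n-2)$ inductive directions. For the matching upper bound, note that $[LCL^{-1}]_+$ automatically has Toda shape with trace zero, so it lies in an ambient space of dimension $2(n-1)$; the constant rank theorem as invoked in Proposition~\ref{orb} then pins the orbit dimension at exactly $2(n-1)$. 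The main technical subtlety is verifying that the rank-one correction does not collapse the inductive $2(n-2)$ directions; restricting to the slice $v=0, \ell=1$, where the correction is identically zero, is the device that circumvents this and is the step where I would be most careful.
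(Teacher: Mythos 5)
Your proposal is correct, and it follows the same overall strategy as the paper (induction on the size of the matrix, a lower bound from explicitly exhibited directions, and the upper bound from the $2(n-1)$-dimensional ambient Toda space), but the execution of the inductive step is genuinely different. The paper builds $C$ up additively as $C_{12}+\cdots+C_{k\,k+1}$, adding one super-diagonal entry at a time; at the four-dimensional stage it only exhibits three independent directions explicitly and then invokes the fact that coadjoint orbits are symplectic, hence even-dimensional, to reach four, and it repeats this pattern at each stage. You instead peel off the last row and column via a $(n-1)+1$ block decomposition of $L$ and $C$, restrict to the slice $v=0$, $\ell=1$ to recover the full $2(n-2)$-dimensional orbit of $C'$ in the upper-left block, and then turn on $\ell$ and $v_{n-1}$ to get two manifestly new directions (the $(n-1,n)$ and $(n,n)$ entries). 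This buys you a fully explicit count of $2(n-1)$ independent tangent directions with no appeal to the parity of symplectic orbits, and the transversality worry you flag is in fact even milder than you suggest: after the projection $[\,\cdot\,]_+$ the rank-one correction $e_{n-1}v^{T}(L')^{-1}$ survives only in the $(n-1,n-1)$ diagonal entry, and since the $v_{n-1}$-direction is the unique one with a nonzero $(n,n)$ component and the $\ell$-direction is supported purely on the $(n-1,n)$ entry while vanishing on the upper-left block, linear independence follows by eliminating coefficients in that order. The paper's route is computationally lighter at each step but less tight; yours is slightly more bookkeeping but closes the gap that the paper papers over with the evenness argument.
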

              \begin{proof}
             We proceed by induction.
            We first  try to understand the case $C_{12} +C_{23}$
            \begin{equation}\label{mat_4}
            ({[L (C_{12} + C_{23})L^{-1}]}_{+})_{11}=-\frac{L_{21}}{L_{22}} c_1,
            \end{equation}
               \begin{equation}\label{mat_5}
            ({[L (C_{12} + C_{23})L^{-1}]}_{+})_{12}=\frac{L_{11}}{L_{22}} c_1,
            \end{equation}
           \begin{equation}\label{mat_6}
            ({[L (C_{12} + C_{23})L^{-1}]}_{+})_{22}=\frac{L_{21}}{L_{22}} c_1 -\frac{L_{32}}{L_{33}}c_2,
            \end{equation}
          \begin{equation}\label{mat_7}
            ({[L (C_{12} + C_{23})L^{-1}]}_{+})_{23}=\frac{L_{22}}{L_{33}} c_2,
            \end{equation}
           \begin{equation}\label{mat_8}
            ({[L (C_{12} + C_{23})L^{-1}]}_{+})_{33}=\frac{L_{32}}{L_{33}} c_2.
            \end{equation}
        Here $c_1$ and $c_2$ are the values of  non zero entries of $C_{12}$ and $C_{23}$ that is ${(C_{12})}_{12}=c_1$ and $(C_{23})_{23}=c_2$ .All other terms of the 
        above matrix is zero.  
        Now for  fixing a  value of 
        $\frac{L_{32}}{L_{33}}c_2$
        the first row entries are unaffected so restriction of the orbit for  fixed $\frac{L_{32}}{L_{33}}c_2$  has 
        at least two dimensions. Now as we change
        $\frac{L_{32}}{L_{33}}c_2$ we get one more dimension thus the orbit will be 
        at least three dimensional and since orbits are symplectic and hence even dimensional, 
        this orbit will be at least  four dimensional. 
          
        Now  assuming the induction hypothesis that orbit  $C_{12} + \ldots + C_{k k+1} $ is at  least $2k$
        dimensional. Since the orbit of $C_{k+1 k+2}$ (which is
        similar to $C_{12}$ case discussed above) is two dimensional, 
        we can show $C_{12} + \ldots + C_{k k+1} +C_{k+1 k+2}$ will have orbit 
        at least 
        $2(k+1)$ dimensional (following the same line of argument as in the $4$-d case.)
          
           So the orbit of the matrix $C$ in the Toda equation \eqref{toda} above 
           is at least $2(n-1)$ dimensional and since it lies in the  Toda 
           system  which is 
           $2(n-1)$ dimensional and the  orbit of $C$  is exactly $2(n-1)$ dimensional.
      \end{proof}
 
 \section{The Symplectic Form}\label{symplectic}

 We summarize the description of the symplectic form if the coajoint orbit as in ~\cite{A}.  We also provide a formula for it.  It is important to note that it corresponds to the  Poisson bracket of the Toda system (given in page 225, ~\cite{A}). This has been shown in the same paper.
 
    Let G be the group of lower triangular matrices with non-zero diagonal entries.
Its Lie algebra L is the lower triangular matrices. 
 
As in ~\cite{A}, we may identify the dual of L, namely $L^{*}$ , with the upper triangular matrices, using the pairing $<A, B>=Tr(AB)$.
$g \in G$ acts on $L$ via conjugation. By duality, it acts on $l^{*} \in L^{*}$  $g : l^{*} \rightarrow
[{g}^{-1}  l^{*} g]_+$ , where $ [, ]_+ $ denotes the projection operator of setting all terms below the
diagonal equal to zero, ~\cite{A}.
This is the coadjoint representation of G. The orbit of this action is through
  $l^{*} \in L^{*}$ is $ \theta_{l^{*}} = \{[g^{-1} l^{*} g]_+ |g \in G\}$.
The tangent space of $\theta_{l^{*}}$ at $l^{*}$ is described by $T_{l^{*}} \theta_{l^{*}} = \{[l^{*} , l]_+ |l \in L\}$.
The Kostant-Kirillov 2-form $\omega$ associated with the orbit space $\theta_{l^{*}}$ is
\begin{center}
$\omega([l^{*} , l_1 ] + , [l^{*} , l_2 ]_+ )(l^{*} ) =< l^{*} , [l_1 , l_2 ] >=< [l^{*} ,l_1 ]_+ , l_2 >$.
\end{center}
When one writes $l_1^{*} = [l^{*} , l_1 ]_+$ and $l_2^{*} = [l^{*} , l_2 ]_+$ and solves for the symplectic form, one gets that the symplectic form is:
\begin{center}
$\omega= \sum _{i=1}^{n-1} \frac{1}{a_i} d(a_i) \wedge d(\sum_{j=1}^{i}b_{j})$.
\end{center}

This form is of integral cohomology class, because it is exact. 

Also, the symplectic form can be written in a standard form by a change of coordinates.
 Recall $a_i >0$ for $i=1,..., n-1$. 
Let $q_i = log (a_i)$ and $p_i = \sum_{j=1}^i b_j$. Then the symplectic form is  the standard symplectic form  on ${\mathbb{R}}^{2(n-1)}$, namely, 
\begin{center}
$\sum_{i=1}^{n-1} d(q_i) \wedge d(p_i).$ 
\end{center}

\section{Polarization of the Complexified Lie Algebra}
Here we descibe the notion of polarization in a Lie algebra as in Kostant ~\cite{K2}.
A polarization at $g$ of an element of the dual lie algebra of a group $G$ is a complex subalgebra  $\mathfrak {h} \subset \mathfrak{g}_{\mathbb{C}}$
(the complexification
 of $\mathfrak{g}$) which satisfies the following conditions 
 
 (1) the subalgebra is is stable under Ad $G_g$ ,
 
(2) $dim_{\mathbb{C}} (\frac{\mathfrak{g}_{\mathbb{C}}}{\mathfrak{h}})=n$,

(3) $< g , [ \mathfrak{h},\mathfrak{h} ] > = 0 $ and 

(4)$\mathfrak{h} + \bar{\mathfrak{h}} $  is a subalgebra of 
$\mathfrak{g}_{\mathbb{C}} $. 

  If we restrict our self  to the group $ G_1$  of lower triangular matrices with non-negative diagonal entries and determinant 1 
  then its lie-algebra $\mathfrak{g}$ is the 
  lower
  triangular matrices with trace zero. 
 \begin{proposition}\label{polar}
   The lie subalgebra of $\mathfrak{g}_{\mathbb{C}}$ , $\mathfrak{h}$ of complex 
  lower triangular matrices with diagonal zero is a polarization at any $g$ in Toda space.
\end{proposition}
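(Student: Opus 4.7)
The plan is to verify the four defining conditions of a polarization in sequence; each reduces to an elementary fact about the shape of products of lower triangular matrices. Throughout, let $g$ be a Toda matrix of the form \eqref{mat} and let $\mathfrak{h}$ denote the complex strictly lower triangular matrices inside $\mathfrak{g}_{\mathbb{C}}$.

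For condition (1), I will establish the stronger claim that $\mathfrak{h}$ is stable under $\mathrm{Ad}\,l$ for \emph{every} invertible lower triangular $l$, which gives stability under $\mathrm{Ad}\,G_g$ a fortiori. The key observation is that the diagonal of a product of three lower triangular matrices is the componentwise product of their diagonals, so $(lXl^{-1})_{ii} = l_{ii}\,X_{ii}\,l_{ii}^{-1} = X_{ii}$, which vanishes when $X \in \mathfrak{h}$. Condition (2) is a dimension count: $\dim_{\mathbb{C}}\mathfrak{g}_{\mathbb{C}} = \binom{n}{2} + (n-1)$ and $\dim_{\mathbb{C}}\mathfrak{h} = \binom{n}{2}$, so the quotient has dimension $n-1$, precisely half the orbit dimension $2(n-1)$ found in Section~\ref{Toda_1}. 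Condition (4) follows at once because $\mathfrak{h}$ is the complexification of the real subspace of strictly lower triangular real matrices; hence $\overline{\mathfrak{h}} = \mathfrak{h}$ and $\mathfrak{h} + \overline{\mathfrak{h}} = \mathfrak{h}$ is already a subalgebra.

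The substantive step is condition (3): I must show $\langle g,\, [X,Y]\rangle = \mathrm{Tr}\bigl(g\,[X,Y]\bigr) = 0$ for all $X,Y \in \mathfrak{h}$. The approach is to combine two structural observations. First, if $X, Y$ are strictly lower triangular, then $(XY)_{ij} = \sum_k X_{ik}Y_{kj}$ is nonzero only when $i > k > j$, which forces $i \geq j + 2$; hence both $XY$ and $YX$, and therefore $[X,Y]$, have their diagonal and first subdiagonal identically zero. Second, since $g$ has nonzero entries only on its diagonal (the $b_i$) and first superdiagonal (the $a_i$), for any matrix $M$ one has $\mathrm{Tr}(gM) = \sum_i g_{ii}M_{ii} + \sum_i g_{i,i+1}M_{i+1,i}$. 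Specialising to $M = [X,Y]$, every term in this expression vanishes.

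I do not anticipate a serious obstacle: each of the four conditions is a short direct check. The only point worth emphasising is the matching of shapes between the ``two-band'' Toda matrix $g$ (nonzero only on the diagonal and first superdiagonal) and the ``double-vanishing'' structure of commutators of strictly lower triangular matrices (diagonal and first subdiagonal zero). This duality in the trace pairing is precisely what forces condition (3) and is the reason this particular $\mathfrak{h}$ is the natural polarization to try.
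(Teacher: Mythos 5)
Your verification is correct, and it is exactly the routine check the paper has in mind: the paper offers no argument beyond the remark ``This involves trivial checking,'' so your write-up simply supplies the omitted details (the diagonal of $lXl^{-1}$ equals that of $X$ for lower triangular $l$; the double-vanishing of the diagonal and first subdiagonal of $[X,Y]$ paired against the two-band shape of $g$; self-conjugacy of $\mathfrak{h}$). The only point to flag is condition (2): the paper's ``$=n$'' is Kostant's generic notation for half the orbit dimension, and your reading of it as $n-1 = \tfrac{1}{2}\cdot 2(n-1)$ is the correct one.
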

  This involves trivial checking.
  
 \subsection{Quantization} Here we recall the notion of geometric quantziation as in Woodhouse ~\cite{W}. A classical system is given by a pair $(M,\omega)$ where $M$ is a symplectic manifold and $\omega$ a symplectic form. 
    Suppose  that the cohomology
class of $\omega $  is integral then geometric pre-quantization is the construction of a line bundle
$L$ over $M$ whose curvature $\rho(\nabla)$ (of a certain connection of $L$ with covariant derivative  $\nabla$) ,
 is proportional to $\omega$. The connection is given by a symplectic potential i.e. a one form $\theta$ such that $\omega = d(\theta)$ locally. The construction of the line bundle is  always possible as long as the cohomology class of $\omega$
is 
proportional to
an integral class. The Hilbert space of prequantization is  the completion of the space of square integrable sections of $L$. The Hilbert space of prequantization 
described above is too large for most purposes. Geometric quantization involves  construction of a polarization (a certain type of distribution ) of the symplectic manifold
such that we now take polarized sections (depending on the above distribution) of the 
line bundle, yielding a finite dimensional Hilbert space in most cases. 
   
  {\bf Polarizations on the Manifold } are complex involutory distributions
 F of dim $n$ such that 
 
 (1) $\omega(F_p F_p ) = 0$ at all $p \in X$ and 
 
 (2) $F + \bar{F}$ is an involutory distribution of constant dimension on $M$. Polarized sections  are sections satisfying $\nabla_{\bar{X}} s =0$ where $X$ lies in $F$.

The following  method of quantization  developed by Kostant and Souriau,  assigns to 
functions $f \in C^{\infty}(M)$, an operator, $\hat{f} = \nabla_{X_f} + 2 \pi i f$ acting on
 the Hilbert space of prequantization  described above. 
 Here $\nabla$ is the covariant derivative of  the  connection $\theta$ of the bundle  and $X_f$ is the Hamiltonian vector field with 
respect to $f$.

  Functions mapping polarized sections to polarized sections are the ones which survive quantization. A sufficient condition for a function $f$
  to do this is that its Hamiltonian vector field $X_f$ involutes with  the  polarization. This can be seen from the following 
  \begin{equation}\label{quant}
    \nabla_{\bar{X }} \hat {f} s = \hat{f}(\nabla_{\bar{X}} s ) + \nabla_{[\bar{X},X_f]} s
   \end{equation}
  here $X$ lies in the polarization and $s$ a polarized section, ~\cite{W}.
   
   \subsection{ Polarization of the Complexified Lie Algebra and that on the Manifold}
  We have discussed two polarizations. One on the complexified lie algebra and the the other on the manifold through involutory distributions.
  When there is a Hamiltonian group action on the manifiold $M$ of a group $G$ the fundamental vector fields are Hamiltonian and thus yielding
   functions. When $M$ is a Hamiltonian-$G$ space the assignment is a lie agebra  homomorphism.
    In case of codajoint orbits  lie algebra polarisation( described above) induces a polarisation in the manifold through fundamental vector fields. 
    In such a case 
    whole lie algebra 
    $\mathfrak{g}_C$ maps to polarized functions thus giving  representation of the lie algebra to the space of polarized sections, ~\cite{K2}.
    
     Since the Toda space $T$ is a  coadjoint orbit from what proved above there is a Hamiltonian group action of $G_1$  the group of invertible
     lower triangular matrices with non-negative diagonal entries of  determinant 1. It can be checked that restricting to $G_1$ does 
     not reduce the orbit since  $G=DG_1$  where $D$ are scalar matrices 
     matrices which lies in stabilizer of the coadjoint action on the Toda space.
      Since our  polarization comes from a lie algebra polarization  $\mathfrak{h}$ (lower triangular matrix with diagonal zero) which satisfies certain criterions (section $4$), 
the  polarized sections will be  a module      of a representation of $\mathfrak{g}_{\mathbb{C}}$. Exponentiating this action we get a 
representation of the group,  \cite{K2} . (More details in section $6$). 
  
     \section{Polarized Sections}
        
     When  the manifold $M$ is a coadjoint orbit the Hamiltonian functions due to fundamental vector fields can be evaluated easily. 
        The Hamiltonian function for a polarized fundamental vector fields is given by
        \begin{equation}\label{Hamil}
         H(l^{*})= Trace(l l^{*})
        \end{equation}
    where $l \in \mathfrak{h}$, as discussed above . Since in our case  diagonal elements of $l \in \mathfrak{h}$ are zero and restricting to real matrices we have
    
    \begin{equation}\label{Hamil_1}
     H( l^{*})= {\sum_{i=1}}^{n-1} l_{{1+1}{i}} l^{*}_{{i}{i+1}}
    \end{equation}
 going to $a_i$ and $b_i$ coordinate we get
 \begin{equation}\label{Hamil_2}
     H( l^{*})= {\sum_{i=1}}^{n-1} l_{1+1 i} a_i.
    \end{equation}
      
      Recall that the symplectic form is 
      \begin{center}
      $\omega = \sum _{i=1}^{n-1} \frac{1}{a_i} d(a_i) \wedge d(\sum_{j=1}^{i}b_{j})$.
      \end{center}
      Substituting
   \begin{equation}\label{mom}
    p_i={\sum_{j=1}}^{i}b_{j}
   \end{equation}
     we have
     $\omega = \sum _{i=1}^{n-1} \frac{1}{a_i} d(a_i) \wedge d(p_i)$
         So from above
         \begin{equation}\label{Vec}
         X_{H}={\sum_{i=1}}^{n-1} a_i l_{i+1 i} \partial{p_i}.
        \end{equation}
        
        So the polarisation is given by the fields $\partial_{p_i} $.
         Now choosing the symplectic potential (or the connection form) to be 
         \begin{center}
         $ \theta = {\sum_{i=1}}^{n-1}\frac{d(a_i)}{a_i}p_i$ 
         \end{center}
         the polarized sections will be solutions to the 
          following PDE 
          \begin{center}
          $(d(s)(X))  + i \theta(X)  s) =0$ 
          \end{center}
          where the section $s$ is just a smooth function due to triviality of the bundle and  
         \begin{center}
         $X = \sum_i  y_i(a, p) \partial_{p_i}$,
         \end{center}
         where $y_i$ is arbitrary and 
         \begin{center}
         $(a, p) = (a_1, a_2,...a_{n-1}, p_1,..,p_{n-1})$. 
         \end{center}
         
         Since 
         \begin{center}
         $\theta(X) =0$, $s=s(a, p)$ 
         \end{center}
         solves the equation
          \begin{equation}\label{pde}
           {\sum_{i=1}}^{n-1} y_i(a,p)\partial_{p_i} s(a,p) =0.
          \end{equation}
  This is because $X = \overline{X}$ in this case. Since  $\omega$ is exact and the Toda space is contractible, we have a trivial bundle and 
  the sections are functions and from the above,  the polarised 
  functions depend only on $a_i$.

 \section{ Unitary Representation}
 
 \subsection{ Induced Representations}
 
 Let us recall the definition of induced representation. 
Suppose $G$ is a topological group and $H$ is a closed subgroup of $G$. Suppose $\pi$ is a representation of $H$ over the vector space $V$. Then $G$ acts 
on the product $G \times V$ as follows:
\begin{center}
$g^{\prime} .(g,v)=(g^{\prime} g,v)$
\end{center}

where $g$ and $g^{\prime}$ are elements of $G$ and $v \in V$.

Define on $G \times  V$ the equivalence relation

\begin{center}
$(g,v) \sim (gh,\pi(h^{-1})(v)) \text{ for all }h\in H$.
\end{center}

Note that this equivalence relation is invariant under the action of $G$. Consequently, $G$ acts on $(G \times V)/ \sim $ . The latter is a vector bundle over the 
quotient space $G/H$ with $H$ as the structure group and $V$ as the fiber. Let $W$ be the space of sections of this vector bundle. This is the vector space underlying 
the induced representation $Ind^G_H \pi$. The group $G$ acts on $W$ as follows:

\begin{center}
$(g.\phi)(\overline{k})=g.\left(\phi(g^{-1}\overline{k})\right) \ \text{ for } g\in G,\ \phi\in W,\ \overline{k}\in G/H.$
\end{center}

 \subsection{Unitary Representation of $G$ on the Space Of Sections}
 
 We refer to the theory developed in ~\cite{K1}, ~\cite{K2}.
  When $M$ is a coadjoint orbit of a group $G$  and $g \in M$ from theorem $5$ in \cite{K2} we have a bijection between $ {\mathfrak{L}}_c(M , \omega)$ and
  ${\mathfrak{L}}_{g}$, (notation as in ~\cite{K2}). Here ${\mathfrak{L}}_{g}$ is the set of characters from  stabilizer group $G_{g}$ to the unit circle at  the fiber at
  the point $g$ of the corresponding element $\mathfrak{l} \in {\mathfrak{L}}_c(M , \omega)$. Calling the character ${\eta}^{\mathfrak{l}}$ we
  get a induced representation of the whole group on  the space of sections, ~\cite{K2}.
  
  \subsubsection{ Prequantization:}
  
  For prequantization, in the definition of induced representation we take 
  
  $G = G_1$, $H = G_T$, the stabilizer subgroup of $T$ in the dual of the Lie algebra. $T $ is a point on the coadjoint orbit $ G / G_T$. The representation 
 of $H$ is given by the character $\eta^{\mathfrak{l}}$.  As mentioned in theorem $5$ in ~\cite{K2}, the unitary representations of $G$ defined by exponentiation 
 of $\gamma^{\mathfrak{l}}$ o $\lambda$ is $\text{ind}_{G} \eta^{\mathfrak{l}}$. Here, recall,  $\lambda(x) (T) =  < T, x>$, which, in our case,  is $Tr(xT)$ where $x$ is in the 
 Lie algebra and $T$ in its dual, both are given by matrices. Also 
 $\gamma^{{\mathfrak{l}}}(\phi) s = ( \nabla_{\zeta_{\phi}} + 2 \pi i \phi) s $, $s$ is a section of the prequantum line bundle 
 and $\phi$ is a smooth function on the coadjoint orbit.

   \subsubsection{Quantization:}
   
   We refer to the last paragraph of ~\cite{K2}.
   We chose our polarization ${\mathfrak{h}}$ which satisfied the criterion of Kostant's polarization (see section $4$).
   Then, the representation $\text{ind}(\eta^{{\mathfrak{l}}}, {\mathfrak{h}})$ of $G$ corresponds to the unitary representation obtained from exponentiation of $ \gamma_F^{\mathfrak{l}} ({\mathfrak{h}}) $ o $\lambda$. (Notation as in ~\cite{K2}). 
   
   \subsubsection{Unitary Representation:}
   
   Recalling the space of polarized sections of the Toda system  are functions depending on variables $a_i$. The 
   action of the induced representation on the inner product is given
   by
    \begin{equation}\label{integral}
 \int_{T} {\lvert g^{*}s(a_1 \ldots a_{n-1})\rvert}^{2} {\lvert h(x)\rvert}^{2} d(V)=\int_{T} {\lvert s(a_1 \ldots a_{n-1})\rvert}^{2} {\lvert h(gx)\rvert}^{2} g^{*}(d(V))
   \end{equation}
   
 where $g \in G_1$ and  $T$ and $d(V)$ is  the orbit and its volume form respectively.  In our case $T$ is the Toda space and $d(V)$ is its volume form induced by the
 symplectic form and $h$ the  Hermitian structure compatible
   with the symplectic potential or the connection form. From  \cite{K1}, ~\cite{K2}, a Hermitian structure  $h$  is $\alpha$ invariant if
\begin{equation}\label{connection}
 2 \pi i(\alpha(s_0)-\bar{\alpha(s_0)})=d(log{\lvert h \rvert}^{2})
\end{equation}
where $s_0$ is the section defining the trivialization.
 In our  trivialization, $\alpha(s_0) = \theta$  where $\theta$  is real and hence  $\lvert h \rvert$ is constant.
   
  If we take $d(V)$ to be ${\omega}^{n-1}$  we have a   $g$ invariant volume since $\omega$ is $g$ invariant. But then square integrals of 
  functions
   depending on $a_i s$ will blow up since $b_i s$   vary over a unbounded domain. This will make the inner
  product to blow up. So we multiply by a normalizing factor
  \begin{equation}\label{vol}
   d(V)= {e}^{-\sum_{i=1}^{n-1} {p_{i}}^{2}} (\omega)^{n-1}
  \end{equation}
 where $p_i= \sum_{j=1}^{i} b_j$
  The action of the group $G_{1}$ on the Toda space  can be inferred from the section  \ref{Toda_1}.
        The variables $a_i$ and $b_i$ transform in following way when an element $g$ acts on it.
        \begin{equation}\label{act_1}
         g(a_i) = \frac{L_{ii}}{L_{i+1 i+1}} a_i.
        \end{equation}
          For $i>1$
        \begin{equation}\label{act_2}
         g(b_i) =  b_i +\frac{L_{i i-1}}{ L_{ii}} a_{i-1} - \frac{ L_{i+1 i}}{L_{i+1 i+1}}a_{i}.
        \end{equation}
        
        For $i=1$
        
         \begin{equation}\label{act_3}
         g(b_1) =  b_1  - \frac{ L_{2 1}}{L_{22}}a_{1}
        \end{equation}
   where $g_{ij} = L_{ij}$ .
    Substituting $p_i= \sum_{j=1}^{i}  b_j$
     we have
      \begin{equation}\label{act_4}
         g(p_i) =  p_i  - \frac{ L_{i+1 i}}{L_{i+1 i+1}}a_{i}.
        \end{equation}
        
        \begin{equation}\label{act_5}
         g^{*}(\d(V))= {e}^{-{\sum_{i=1}}^{n-1}{(p_{i}-\frac{ L_{i+1 i}}{L_{i+1 i+1}}a_{i})}^{2}} {\omega}^{n-1}.
        \end{equation}

         So the inner product $(gs, gs)$ is given by the following expression:
         
         \begin{equation}\label{act_6}
\int_0^{\infty} \ldots \int_0^{\infty}  \int_{-\infty}^{\infty}\ldots \int_{-\infty}^{\infty} {\lvert s(a_1 \ldots a_{n-1})\rvert}^{2} {e}^{-{\sum_{i=1}}^{n-1} {(p_{i}-\frac{ L_{i+1 i}}{L_{i+1 i+1}}a_{i})}^{2}}{\omega}^{n-1}.       
         \end{equation}
         We first perform the $p_i$-integrals.
 Since the $p_i$ coordinates range from  $(-\infty,\infty)$, making a substitution
 $y_i=p_i  -  \frac{ L_{i+1 i}}{L_{i+1 i+1}}a_{i}$ we get
 
 \begin{equation}\label{unit}
  (gs,gs)=(s,s).
 \end{equation}
 
 This  means $g$ is a  unitary representation. From last page  of \cite{K2} we have this representation is the exponential of the lie algebra 
 representation defined through connections and Hamiltonian functions described above.
   Since the $p_i$ parts of the integral evaluates to constant the inner product is
 
 \begin{equation}\label{inner}
  (s,s)=c \int_0^{\infty} \ldots \int_0^{\infty} {\lvert s(a_1 \ldots a_{n-1}) \rvert}^{2} \frac{d(a_1) \ldots d(a_{n-1})}{a_1 \ldots a_{n-1}}.
 \end{equation}
Thus the  Hilbert space of quantization, $\mathfrak{H}$,  is the completion of smooth functions of $a_i$'s  are such that the above integral  is finite.
\begin{remark}\label{ext}
 Since lower triangular matrices $L$ with positive diagonal entries
is given by $D G_1$ where $G_1$ is the group of our representation and $D$  scalar and since
$D$ has a trivial action on coadjoint orbit our representations can be extended to the whole group
$L$.
\end{remark}

\section{Coherent States}

\subsection{Rawnsley Coherent States}
In ~\cite{R} Rawnsley defined coherent states corresponding to geometric quantization of compact Kahler manifolds.
We repeat the construction for our case (namely, $\mathfrak{L}^2$-completion of smooth functions). 

In our case the pre-Hilbert space consists of sections which are smooth functions of $a_i$ such that the integral in the above section is finite.
 After completing the space w.r.t. the  ${\mathfrak{L}}^2$ norm we get a Hilbert space $\mathfrak{H}$. By substituting $x_i = log(a_i)$ we find that the  Hilbert space lies in 
${\mathfrak{L}}^{2}({\mathbb{R}}^{n-1})$.

Consider the 
associated $C^{*}$ principal bundle $L_0 \rightarrow M $ of $L$ . Take  an element $ q \in L_0$ 
  define a function $\Delta_q$
  on the pre-Hilbert of  space sections described above   such that 
  $\Delta_q(s).q = s(\pi q )$. 
   
   Since we have a trivial bundle the smooth sections  are smooth functions and $\Delta_q(s)$ is a constant times the  
   evaluation map of $s$ at $\pi(q)$.
   \begin{lemma}\label{eval}
    The evaluation map at a point on square integrable  smooth functions in ${\mathfrak{L}}^{2}$ norm is continuous.
   \end{lemma}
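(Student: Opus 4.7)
The plan is to show that, for a fixed base point $a_0 = (a_0^1,\dots,a_0^{n-1})$, the evaluation functional $s \mapsto s(a_0)$ (which is what $\Delta_q$ reduces to once a trivialization is fixed) is a bounded linear functional on the pre-Hilbert space of smooth square-integrable polarized sections in the $\mathfrak{L}^2$ norm of~(6.9), hence continuous, and so extends uniquely to $\mathfrak{H}$. Riesz representation will then produce the coherent state $e_q \in \mathfrak{H}$ required in the next subsection.

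First I would transport the problem to $L^2(\mathbb{R}^{n-1})$ via $x_i = \log a_i$: by equation~(6.9), after the Gaussian $p_i$-integrals and the change $\frac{da_i}{a_i} = dx_i$, the inner product on polarized sections coincides, up to a positive constant, with the flat Lebesgue $L^2$ product. In these coordinates $\Delta_q$ becomes, up to a constant depending on $q$, pointwise evaluation $s \mapsto s(x_0)$, so the lemma reduces to an estimate of the form $|s(x_0)| \leq C \|s\|_{L^2(\mathbb{R}^{n-1})}$ valid for every smooth $s$ in the pre-Hilbert space.

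The main obstacle is precisely this pointwise bound: on arbitrary smooth $L^2$ functions on $\mathbb{R}^{n-1}$ it is false, as seen by taking tall narrow bumps centered at $x_0$ with fixed height and shrinking support, which give $s_k(x_0) = 1$ while $\|s_k\|_{L^2} \to 0$. Some additional input from the polarization must therefore be used. The natural attempt is a mean-value or reproducing-kernel estimate of the form $|s(x_0)|^2 \leq C \int_U |s|^2\, dx$ on a fixed neighborhood $U$, playing the role of the Bergman estimate that governs the Kähler case for holomorphic sections. Since our polarization is real, the condition $\nabla_{\bar X} s = 0$ only forces $\partial_{p_i} s = 0$ and does not by itself produce such a bound; establishing it rigorously requires either imposing uniform local derivative bounds on the smooth representatives actually lying in the completion, or extracting ellipticity from a different source. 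This is the critical and most delicate step.

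Granting the local bound $|s(x_0)| \leq C \|s\|_{L^2}$ for smooth $s$, continuity of $\Delta_q$ in the $\mathfrak{L}^2$ norm follows immediately from linearity, and density of smooth sections in $\mathfrak{H}$ furnishes the unique continuous extension to the completion, concluding the lemma and supplying the input needed for the Rawnsley construction.
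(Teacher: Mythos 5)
Your proposal has a genuine gap, and to your credit you have named it yourself: everything rests on the pointwise bound $|s(x_0)| \le C\,\|s\|_{L^2}$, which you only ``grant'' rather than prove, and which --- as your own narrow-bump example shows --- is false on the space of all smooth square-integrable functions. Since the polarization here is real and the polarized sections are arbitrary smooth functions of the $a_i$ alone, there is no analogue of the mean-value/Bergman estimate that rescues the holomorphic (K\"ahler) case, so the missing step cannot be filled without adding hypotheses to the lemma. As written, your argument is incomplete.

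That said, your diagnosis is exactly on target, and it is worth recording that the paper's own proof stumbles at the very same place. The paper argues that $L^2$ convergence of smooth $s_n$ to smooth $s$ forces pointwise convergence: assuming $|s(x)-s_{n_k}(x)|>\epsilon$ along a subsequence, it takes $U$ to be the (positive-measure, open) set where $s$ lies within $\epsilon/2$ of $s(x)$ --- a set determined by $s$ alone --- and concludes $\|s-s_{n_k}\|_2 \ge \epsilon\, m(U)^{1/2}$. But knowing that $s$ stays near $s(x)$ on $U$ while $s_{n_k}$ differs from $s$ at the single point $x$ gives no lower bound on $|s-s_{n_k}|$ over the rest of $U$; your bump sequence with $s_k(x)=1$, support shrinking to $\{x\}$, and $\|s_k\|_2 \to 0$ (compared against $s\equiv 0$) violates the claimed inequality outright. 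So neither argument establishes the lemma as stated; some additional regularity or uniformity assumption on the sections (for instance, uniform local equicontinuity of the admissible representatives, or restriction to a reproducing-kernel subspace of $L^2$) is needed before the evaluation functional can be continuous and the Riesz construction of $e_q$ can proceed.
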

   \begin{proof}
     Suppose $s_n$ converge to $s$   and $s_n$  and $s$ smooth then $s_n$ converge point wise. Suppose not implies 
     there exist  $x$ such that $s_n(x)$ does not converge to $s(x)$. This 
     implies there exist a sub sequence  $s_{n_k}$ and  an $\epsilon$ such that $\lvert s(x)-s_{n_k}(x) \rvert > \epsilon$. Now 
     $ s^{-1}(x-\frac{\epsilon}{2}, x+\frac{\epsilon}{2})$ is
     an open set $U$ of positive measure. So from the above ${\lvert s-s_{n_{k}} \rvert}_2$ is  greater than $\epsilon {m(U)}^{\frac{1}{2}}$ where $m(U)$ is the Lebesgue measure of $U$ and 
     ${\lvert \ldots \rvert}_2$ is
     the ${\mathfrak{L}}^{2}$ norm. This contradicts ${\mathfrak{L}}^{2}$ convergence of $s_n$ to $s$. 
     
     \end{proof}

    \begin{lemma}\label{Ext_1}
    Any continuous linear functional $f$ on a pre-Hilbert $V$ space extends to a continuous linear functional $\tilde{f}$ on  a 
    Hilbert space $\overline{V}$. 
   \end{lemma}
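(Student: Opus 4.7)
The plan is to invoke the standard bounded linear extension (BLT) principle: since $V$ sits densely in its completion $\overline{V}$, a uniformly continuous linear map into a complete space extends uniquely by taking limits of Cauchy sequences.

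First I would record that a linear functional on a normed space is continuous iff it is bounded, so there exists $C \geq 0$ with $|f(v)| \leq C \|v\|$ for all $v \in V$. Next, for any $x \in \overline{V}$, pick a sequence $v_n \in V$ with $v_n \to x$ in the Hilbert norm. The sequence $(v_n)$ is Cauchy, and the inequality $|f(v_n) - f(v_m)| \leq C \|v_n - v_m\|$ shows that $(f(v_n))$ is Cauchy in $\mathbb{C}$, hence convergent. Define $\tilde{f}(x) := \lim_{n \to \infty} f(v_n)$.

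Then I would verify that $\tilde{f}$ is well defined: if $v_n \to x$ and $w_n \to x$, then $\|v_n - w_n\| \to 0$, so $|f(v_n) - f(w_n)| \leq C \|v_n - w_m\| \to 0$, forcing the two limits to agree. Linearity of $\tilde{f}$ follows from linearity of $f$ together with continuity of addition and scalar multiplication, since one can choose approximating sequences for a sum or scalar multiple by adding or scaling coordinatewise. Continuity of $\tilde{f}$ (with the same bound $C$) follows by passing to the limit in $|f(v_n)| \leq C \|v_n\|$ and using continuity of the norm on $\overline{V}$. Finally, $\tilde{f}$ restricts to $f$ on $V$, since one may take the constant sequence $v_n = v$ for $v \in V$.

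There is no real obstacle here: the only step that needs any care is the well-definedness (independence of the approximating sequence), which is immediate from boundedness. The lemma is essentially the density argument used in the proof of the Riesz representation theorem and in the completion construction itself, and in our application it will allow $\Delta_q$, which is by Lemma~\ref{eval} continuous and linear on the pre-Hilbert space of smooth square-integrable sections, to extend to a continuous linear functional on the full Hilbert space $\mathfrak{H}$ and hence be represented by a Rawnsley coherent state vector via Riesz.
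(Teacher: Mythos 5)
Your proposal is correct and takes essentially the same route as the paper: both extend $f$ to $\overline{V}$ by density, defining $\tilde{f}(x)$ as the limit of $f(v_n)$ along a sequence $v_n \in V$ converging to $x$ and then checking continuity of the extension. If anything, your write-up is more careful than the paper's, which asserts the definition-by-limits without explicitly verifying that the limit exists and is independent of the approximating sequence — precisely the points you handle via the bound $|f(v)| \leq C\|v\|$.
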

 \begin{proof}
 This can be found in any standard functional analysis text book  but we give a proof for the reader's convenience. Suppose we 
 have a sequence $s_n$  in $V$
 which converges to $s$ in $\overline{V}$ then $f(s_n)$ converges to $\tilde{f}(s)$ by definition. Now if we have a sequence $\tilde{s}_n$ which 
 converges to $s$ where $\tilde{s}_n$ in $\overline{V}$, then we can find $s_n \in V$ such that ${\lvert s_n-\tilde{s}_n \rvert}_{\overline{V}}< \frac{1}{n}$ and 
 $\lvert f(s_n)-\tilde{f}(\tilde{s}_n)\rvert< \frac{1}{n}$. Since $\tilde{s}_n$ converge to $s$ then  $s_n$ converge to $s$ so by 
 definition $f(s_n)$ converges to $\tilde{f}(s)$. Since we have
 chosen $s_n$ such that  $\tilde{f}(\tilde{s}_n)$ and $f(s_n)$ converge to the same point we have $\tilde{f}(\tilde{s}_n)$ converges to $\tilde{f}(s)$.
 \end{proof}
 \begin{corollary}\label{Ext_2}
  The functional  $\Delta_q$ extend to continuous linear functional $\tilde{\Delta_q}$ on the completion.
 \end{corollary}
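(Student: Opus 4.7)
The plan is to derive the corollary as an immediate consequence of the two preceding lemmas together with a brief observation about the structure of $\Delta_q$. Since the prequantum bundle $L$ is trivial over the contractible Toda space, a trivializing section identifies $L_0$ with $M \times \CC^{*}$, and under this identification any $q \in L_0$ over $\pi(q) \in M$ corresponds to a pair $(\pi(q), \lambda)$ with $\lambda \in \CC^{*}$. The defining relation $\Delta_q(s) \cdot q = s(\pi(q))$ then reads $\Delta_q(s) = \lambda^{-1} s(\pi(q))$, so $\Delta_q$ is (up to the nonzero scalar $\lambda^{-1}$) the evaluation functional at the point $\pi(q)$ acting on smooth functions representing sections.

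Next I would verify that $\Delta_q$ is a continuous linear functional on the pre-Hilbert space of smooth $\mathfrak{L}^2$-sections. Linearity is immediate from the formula $\Delta_q(s) = \lambda^{-1} s(\pi(q))$, since pointwise evaluation and scalar multiplication are both linear in $s$. Continuity with respect to the $\mathfrak{L}^2$ norm is exactly the content of Lemma \ref{eval}, the only difference being the harmless multiplicative constant $\lambda^{-1}$, which preserves continuity.

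Finally, I would apply Lemma \ref{Ext_1} directly: any continuous linear functional on the pre-Hilbert space of smooth sections extends to a continuous linear functional on its completion $\mathfrak{H}$. Denoting the extension by $\tilde{\Delta_q}$ gives the desired functional on $\mathfrak{H}$.

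There is no genuine obstacle in this argument; everything has been set up in Lemmas \ref{eval} and \ref{Ext_1}. The only point worth being careful about is keeping track of the scalar $\lambda$ coming from the fiber coordinate of $q$ so that the formula $\Delta_q(s) \cdot q = s(\pi(q))$ is respected on the nose. Since the extension in Lemma \ref{Ext_1} is unique (any two continuous extensions of the same continuous linear functional from a dense subspace must agree), no ambiguity arises in defining $\tilde{\Delta_q}$.
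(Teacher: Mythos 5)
Your proposal is correct and follows essentially the same route as the paper: the paper identifies $\Delta_q$ as a constant multiple of the evaluation map at $\pi(q)$ via the global trivialization, obtains continuity from Lemma \ref{eval}, and extends to the completion via Lemma \ref{Ext_1}. Your added care about the fiber scalar $\lambda$ and the uniqueness of the extension is a harmless elaboration of the same argument.
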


  So by Riesz representation theorem
  there is a section $e_q$ such that $\tilde{\Delta_q}(s)q= (e_q,s)q$, where $( , )$ is the inner product on the Hilbert 
  space. For $q$ and $q_1$ in the same fiber $e_q$ and $e_{q_1}$  differ by a multiplication
  by a constant. Thus we get a map from the base to the projectivization of the Hilbert space. For every $x$ in $M$ the projectivized state it
  maps to is what  is called a  $Rawnsley$ coherent state.
 \begin{remark}\label{eval}

 Recall that  the $\Delta_q$'s are  evaluation 
 maps on the space of  smooth square integrable functions (where $q$ is the element $1$ on the fiber above $\pi(q) $ corresponding  to the  global trivialization). Suppose $s_n$ is a sequence of smooth sections (in our case functions) converging in  ${\mathfrak{L}}^{2}$ norm to $s$, then
 $s_n$ will converge pointwise to a function $s^{'}$.  Because of the continuous extension of  $\Delta_q$  (which 
 is just the evaluation for the $q$ we have 
 chosen) we have   $s^{'}(\pi(q))= \tilde{\Delta_q}(s)$. Now since ${\mathfrak{L}}^{2}$ convergence implies pointwise convergence of 
 a subsequence outside a set  of zero measure, $s_n$ converge to $s^{'}$ in ${\mathfrak{L}}^{2}$. 
  
   Though ${\mathfrak{L}}^{2}$  space are not evaluative 
   we can choose  the  function $s^{'}$  as a representative, i.e.  the 
   function $s^{'}(\pi(q))= \tilde{\Delta_q}(s)$ where $q$ is again the element $1$  on the fiber above $\pi(q) $  with respect to the  global trivialization, 
   $\tilde{\Delta_q}$ is the extension of $ \Delta_q$,  $s$ is 
   an element of ${\mathfrak{L}}^{2}$ lying in completion and $s^{'}$ a representative of $s$ described as  
  above. Hence taking these 
   special $s^{'}$ we get an evaluation of elements of the completion. For defining coherent states, the particular choice of $q$ does not matter, since any two choices of $q$ will differ by multiplication by a constant. 
  \end{remark} 
 
 Since we have a trivial bundle the  smooth sections are functions and from above 
 the Hilbert space $\mathfrak{H}$ also consists of evaluative functions 
   so that the inner product with  Rawnsley coherent  vectors for the special $q$ at $x$ is just the evaluation of the function
   at $x$. Now take an orthonormal basis of $\mathfrak{H}$ ,${\phi_{i}}$  with $i \in I$ for some index set $I$. Then the dot product of 
   $\phi_{i}$ with a function
      $f_x$ (denoting the coherent state which evaluates at $x$)  is $\phi_i (x)$. So the coherent state at $x$  is represented by the vector 
      be $f_x = \sum_{i \in I} \phi_i(x) \phi_i$.

  

\subsection{Coherent States from Group Actions and the Relation to Rawnsley Coherent States}

Given the unitary $G$ action on the Hilbert space of sections, states of the form $G \psi_0$, where  $\psi_0$ is the ``vacuum'' state, are  also called coherent states.

  Consider a generic section $s$ of the bundle $L$ on the  manifold (coadjoint orbit in our case) which is  nonzero outside a set of zero measure.
  Let the set on which it is nonzero be called $U$. Let $x \in U$  , then $s(x)\in L_0$ and so we can  have $e_{s(x)}$. So from above we have
  \begin{equation}\label{coher_1}
  (e_{s(x)},s_1) =\frac{s_1(x)}{s(x)}.
  \end{equation}
  For any smooth section $s_1$ in the Hilbert space.
   Now take $e_{s(x)}$ and let $g \in  G$ act on it. For $g e_{s(x)}$ to be a   Rawnsley's  coherent  vector a good guess for its value will be $c e_{gs(x)}$.
    So we have
    \begin{equation}\label{coher_2}
  (g e_{s(x)},s_1) =c\frac{s_1(gx)}{s(gx)}
  \end{equation}
  but by unitarity of the action
  \begin{equation}\label{coher_3}
    (g e_{s(x)},s_1)=(e_{s(x)}, g^{-1}s_1)=\frac{g^{-1}s_1(x)}{s(x)}.
    \end{equation}
    
    So by the above equations, for the two notions to agree,  we should have
   \begin{equation}\label{coher_4}
   c\frac{s_1(gx)}{s(gx)}=\frac{g^{-1}s_1(x)}{s(x)}.
   \end{equation}
  
  In general the coherent states obtained from group action may not be the same as Rawnsley coherent states.
  But there are cases when they are the same. 
  
  Let $G$ be the group of translations acting on ${\mathbb{R}}^{2n}$ with standard symplectic form. Further let  $g \in G $ act  by translation by  $w$, let  $x=0$
  and 
  let $\frac{s_1}{s}$ be represented  by the  function $\Phi(z)$.
  Then it can be seen  the left hand term of the above equation with $c=1$ is $\Phi(w)$ and right hand side is $\Phi((z-w)=0)$  which is $\Phi(w)$.
  So the two kinds of coherent states are same in case of the above group action.
  
  We have the following proposition for geometric quantization of coadjoint orbits.
  
\begin{proposition}\label{coher_5} For Kostant-Souriau quantization of coadjoint orbits, the two notions of coherent states are the same. 
\end{proposition}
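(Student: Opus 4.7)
The plan is to verify equation \eqref{coher_4} directly, exploiting that over a coadjoint orbit the prequantum line bundle $L \to M$ is $G$-equivariant and that the unitary representation of Section~6 is the induced action on sections, $(g \cdot s_1)(x) = g \cdot s_1(g^{-1}x)$, where the dot on the right denotes the fibrewise linear action on $L$ covering the action on $M$. Equivalently $(g^{-1} s_1)(x) = g^{-1} \cdot s_1(gx) \in L_x$.

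First I would substitute this expression into the right hand side of \eqref{coher_4}:
\[
 \frac{(g^{-1} s_1)(x)}{s(x)} \;=\; \frac{g^{-1}\!\cdot s_1(gx)}{s(x)}.
\]
Both numerator and denominator live in $L_x$, so the ratio is a genuine complex number. Since $g:L_x \to L_{gx}$ is a linear isomorphism, multiplying top and bottom by $g$ preserves the quotient while moving it into the fibre over $gx$, yielding $s_1(gx)/(g \cdot s(x))$. Hence
\[
 \frac{(g^{-1} s_1)(x)}{s(x)} \;=\; \frac{s_1(gx)}{s(gx)} \cdot \frac{s(gx)}{g \cdot s(x)},
\]
and \eqref{coher_4} holds with $c = s(gx)/(g \cdot s(x))$, a complex scalar depending on $g, x, s$ but crucially independent of $s_1$.

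Next I would interpret this conclusion. The identity $(g e_{s(x)}, s_1) = c \,(e_{s(gx)}, s_1)$ holds for every $s_1 \in \mathfrak{H}$, so Riesz representation forces $g e_{s(x)} = \overline{c}\, e_{s(gx)}$. In the projectivization $\mathbb{P}(\mathfrak{H})$ — which is where Rawnsley coherent states are actually defined, since different lifts $q$ inside one fibre produce proportional $e_q$, as noted after Corollary~\ref{Ext_2} — this says that the unitary $G$-action carries the Rawnsley coherent state over $x$ to the Rawnsley coherent state over $gx$. Consequently the orbit of any single Rawnsley coherent state under $G$ sweeps out precisely the family of all Rawnsley coherent states, and this is the content of the proposition (taking any such state as the ``vacuum'' $\psi_0$).

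The main obstacle is bookkeeping rather than conceptual: one must reconcile the pullback notation $g^{*} s$ used in Section~6 with the $G$-action on $L$ induced by the character $\eta^{\mathfrak{l}}$, and keep track of which fibre each symbol inhabits at every step. One should also note that the argument presupposes $s(x), s(gx) \neq 0$, but as observed in the paragraph preceding \eqref{coher_1} a generic section is nonzero off a null set, so the identification of the two notions holds on a dense set of base points and the remaining cases follow from continuity of the coherent-state map into $\mathbb{P}(\mathfrak{H})$.
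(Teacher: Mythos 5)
Your proposal is correct and takes essentially the same route as the paper: both verify \eqref{coher_4} by using the fact that the induced representation acts on sections through the fibrewise linear action of $G$ on the prequantum line bundle, so that the evaluation functional at $x$ is carried to a scalar multiple of the evaluation functional at $gx$. The only difference is presentational — you keep the scalar $c = s(gx)/(g\cdot s(x))$ explicit and dispose of it by passing to the projectivization $\mathbb{P}(\mathfrak{H})$, while the paper chooses the trivialization constants $c_1 = c' = 1$ so that $c=1$ outright; your version also spells out the concluding step (that the $G$-orbit of one Rawnsley coherent vector projectively sweeps out the whole family) which the paper leaves implicit.
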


\begin{proof}
In this case  the representation module is the space of sections and the representation is the 
 induced representation of a character   from stabilizer of an element of the orbit to the fiber of the orbit.  In this case case the two notions of coherent states are the same as can be seen below.

 By definition of induced representation, (see section $6.1$),   $g^{-1}.\phi(\overline{k})=g^{-1}.(g \overline{k},v')=(\overline{k},v')$ where $v'$ is the representative of $\phi(g \overline{k})$ at $g \overline{k}$ when the section is pulled back to $G \times V$. 
   Now let the element $x$ be as described above and let $L_x$ is the fiber on which the character acts  and let $s_1=\phi$ and $\overline{k}=x$. So 
   $v'=\frac{s_1(gx)}{s(gx)} c^{\prime}$ where $c^{\prime}$ is the representative of  $s(gx)$ at $gx$ and $g^{-1}s_1(x)=v' \frac{s(x)}{c_1}=\frac{s_1(gx)}{s(gx)}\frac{c^{\prime}}{c_1} s(x) $ where
   $c_1$ is the representative of $s(x)$ at $x$.
Thus, $\frac{g^{-1} s_1(x)}{s(x)} = \frac{s_1(gx)}{s(gx)}\frac{c^{\prime}}{c_1}$.
In the local trivialization defined by $s(x)$,  one can always take $c_1 = c^{\prime} =1$. Thus in equation $(7.4)$ $c=1$.
\end{proof}

\section{ Finite Dimensional Representations}
 We can have finite representations of the group $G_1$
 \begin{equation}\label{fin}
  s(a_1 \ldots a_{n-1})=(a_1 \ldots a_{n-1}) P_m(a_1 \ldots a_{n-1}) 
 \end{equation}
where $P_m$  are polynomial in $a_i s$ of degree $m$. It can be checked $g(s(x)) =s(g^{-1} x)$ will keep the above space
of sections invariant.
But they are not square integrable. When we normalize by ${e}^{-\sum_{i=1}^{n-1} a_i^2} $ we may loose unitarity. So we  get finite 
representations which are not necessarily unitary. 

\vspace{.2in}

 \begin{remark}\label{nor}  Normalization of volume is equivalent to changing the Hermitian structure

Suppose a non-zero section $s$ locally  trivializes  a bundle $(L,\alpha)$ where $\alpha$ is a connection in the associated line bundle $L_{0}$. A Hermitian structure compatible
with the connection is an Hermitian metric $< >$ such that
\begin{center}
 $\zeta(<s,s>)= <\nabla_{\zeta}s,s> +<s,\nabla_{\zeta}s>$
\end{center}
and
\begin{center}
$ <\nabla_{\zeta}s,s> +<s,\nabla_{\zeta}s>=2\pi i<\alpha(\zeta) s, s>  - 2\pi i< s, \alpha(\zeta) s >$ 
\end{center}

where $s$ is a section which locally trivializes the bundle.

From this we get  ( see proposition 1.9.1 Kostant ~\cite{K1})
\begin{center}
 $d<s,s>= 2 \pi i (\alpha(s) - \overline{(\alpha(s))})$.
\end{center}

In the case of  a trivial bundle  with $s_0$ the section which gives the trivialization, if the connection $\alpha(s_0) $ is real, then we have  
$d<s_0,s_0>=0$. 
 
 Thus  $<s_0,s_0>=c$, where $c$ is a constant. Let us take $c=1$.
 
 Taking another section $s= \frac{s}{s_0} s_0$ we have
 \begin{center}
 $<s,s>={\lvert \frac{s}{s_0} \rvert}^{2} $.  
 \end{center}
 So in this setting we get a  unitary character ${\eta}^{l}$ and taking the volume form to be ${\omega}^{n-1}$ (which is invariant under the group action),
 we get  a unitary representation on the space of  square integrable sections . 
 
 In our case the polarized  sections are not  square integrable with $\omega^{n-1}$ as the volume form, but we do have a representation by virtue of the 
 fact they are sections and because of the polarization chosen. When we normalize the volume, we  change the Hermitian structure. However $\eta^{l}$ may not be  a compatible 
 unitary character in this Hermitian structure so we  may not get unitary representation in general.
 
 But in the case where we normalized by ${e}^{-\sum_{i=1}^{n-1} p_i^2} $, we do get unitary representation.
 \end{remark}
 \section{ The Quantum Hamiltonian of the Toda System}
 By substituting $q_i= log(a_i)$, $i = 1,...,n-1$ and $p_i= \sum_{j=1}^{i} b_j$, $i =1,...,n-1$ the Hamiltonian described in introduction is given by
 \begin{equation}\label{ham1}
  H= 2{\sum_{i=1}}^{n-1} (p_i-p_{i-1})^{2} +4{\sum_{i=1}}^{n} {e}^{2 q_i}
 \end{equation}
 with $p_0=0$.
 Now quantizing $p_i$ and $q_i$ ( using the formula $\hat{f}= \nabla_{X_f} + 2 \pi i f$ for $f =p_i$ and $q_i$ respectively), we 
 get $\hat{p_i}= -i \partial{q_i}$ and $\hat{q_i}=q_i$. Substituting in the expression for the Hamiltonian $H$, we get 
 
 \begin{equation}\label{ham2}
  \hat{H}= -2 \partial{q_1}^{2} -2{\sum_{i=2}}^{n-1} (\partial{q_i} - \partial{q_{i-1}})^{2}   +4{\sum_{i=1}}^{n-1} {e}^{2q_i}.
 \end{equation}

Let $q_i =z_{i} - z_{i+1}$ for $i<n-1$ and $q_{n-1}=z_{n-1}$.

Then 
\begin{center}
$\partial_{z_{i}} = \partial{q_{i}} - \partial{q_{i-1}}$ 
\end{center}

for $n-1 \geq i>1$ and 
\begin{center}
$\partial_{z_{1}} = \partial{q_{1}},$ 
\end{center}
 
 \begin{center}
$z_{i+1}-z_1 = -\sum_{j=1}^i q_j $  \rm{\;for\;}  $n-1>i>0$ .  
 \end{center}
 Further 
 \begin{center}
 \begin{equation}
 \hat{H} = -2 \sum_{i=1}^{n-1} (\partial_{z_i})^2 + 4 (\sum_{i=1}^{n-2} {e}^{2(z_i - z_{i+1})} + {e}^{2z_{n-1}}). 
 \end{equation}
 \end{center}

As in ~\cite{KL}, ~\cite{Se}, the eigenfunctions can possibly be related to Whittacker functions. Also the spectrum needs to be found. This is work in progress.

\section{ Acknowledgements} Rukmini Dey would like to thank Professor Dileep Jatkar, Professor Adimurthy, Professor Samir K Paul, Professor Leon Takhtajan  and Soumyadeep Chaudhuri for the useful discussions.
Saibal Ganguli would like to thank ICTS-TIFR Bangalore for hospitality while the work was carried out.

\section*{Bibliography}

  Rukmini Dey \\
  ICTS-TIFR,\\
  Survey 151, Shivakote,\\
  Hesaraghatta Hobli, \\
  Bangalore 560089\\
  India\\
  {\it E-mail address:} {\tt rukmini@icts.res.in}\\[0.3cm]
  Saibal Ganguli
  HRI, \\
  Chhatnag Road Jhusi\\
  Allahabad 211019\\
  India\\
  {\it E-mail address}:{\tt saibalgan@gmail.com}
  
  \label{last}

\end{document}